\algrenewcommand{\algorithmiccomment}[1]{\hfill[{\it #1}]}
\newcommand{\N}{\mathbb{N}}
\newcommand{\set}[2]{\{#1:#2\}}
\newcommand{\genset}[1]{\langle#1\rangle}
\newcommand{\id}{\operatorname{id}}
\renewcommand{\to}{\longrightarrow}
\newcommand{\s}{\mathbf{s}}
\renewcommand{\i}{\mathbf{i}}
\renewcommand{\t}{\mathbf{t}}
\newcommand{\lcm}{\operatorname{lcm}}
\newtheorem{thm}[equation]{Theorem}
\newtheorem{theorem}[equation]{Theorem}
\newtheorem{lem}[equation]{Lemma}
\newtheorem{cor}[equation]{Corollary}
\newtheorem{defn}[equation]{Definition}
\newtheorem{exam}[equation]{Example}
\newtheorem*{assumption}{Assumptions}
\newcommand{\InvPres}[2]{\mathrm{Inv}\langle #1 \mid #2 \rangle}
\DeclareMathOperator{\im}{im}
\DeclareMathOperator{\dom}{dom}
\begin{document}
\title{Counting monogenic monoids and inverse monoids}
\author{L. Elliott, A. Levine, and J. D. Mitchell\footnote{Email: jdm3@st-andrews.ac.uk}}
\maketitle
\begin{abstract} 
 In this short note, we show that the number of monogenic submonoids of the
  full transformation monoid of degree $n$ for $n > 0$, equals the sum of the
  number of cyclic subgroups of the symmetric groups on $1$ to $n$ points. 
  We also prove an analogous statement for monogenic subsemigroups of the
  finite full transformation monoids, as well as monogenic inverse submonoids and subsemigroups of the finite symmetric inverse monoids.
\end{abstract}

\section{Introduction}
% {\color{red} Alex and Luke are happy to submit this as soon as James is, but don't do it (unless through alex)}

In this short note we count, up to isomorphism, the number of monogenic subsemigroups and submonoids of the finite full transformation monoids and the number of
monogenic inverse subsemigroups and submonoids of the finite symmetric inverse monoids.

The question of counting the monogenic subsemigroups of the finite full transformation monoids 
was posed to the third author by A. Egri-Nagy in the course of their work on~\cite{East2017}. The general case of counting all of the subsemigroups of the finite full transformation monoids up to isomorphism seems extremely complex, but the highly restricted case of the monogenic subsemigroups is tractable. 
 The numbers of subsemigroups of the full transformation monoids and the number of inverse subsemigroups of the symmetric inverse monoids are very large (see for example Corollary 3.3 of \cite{Pyber93} or  Theorem 9.1 in \cite{Cameron2017}).
So, unsurprisingly, the monogenic semigroups account for a tiny proportion of the subsemigroups of these monoids.

The question of counting the monogenic inverse subsemigroups of the finite symmetric inverse monoids arose naturally in the context of another project of the authors of the present note. 
Every finite semigroup can be embedded in a full transformation monoid, and every finite inverse semigroup can be embedded in some symmetric inverse monoid.  
As such counting, or characterising, the transformation monoids, or inverse monoids, that can be found as subsemigroups of any given full transformation monoid, or symmetric inverse monoid, seems natural enough. 
The structure of monogenic semigroups is rather straightforward, and while the structure of monogenic inverse semigroups is more involved, they are sufficiently straightforward that it is possible to enumerate them. 
Similar questions have been studied for finite symmetric groups, see, for example, \cite{Holt2010} and the references therein, and for other classes of semigroups and monoids; see, for example,~\cite{Distler2012} and~\cite{Russell2021}.

There are a number of results in the literature relating to monogenic inverse semigroups. Preston~\cite{Preston1986aa} presented a description of all monogenic inverse monoids up to isomorphism. This description was independently obtained by Conway, Duncan and Paterson \cite{Conway1984} in the context of \(C^\ast\)-algebras. Dyadchenko \cite{Dyadchenko1984} gave a number of results on
monogenic inverse semigroups by studying free monogenic inverse semigroups. Some of the results in \cite{Dyadchenko1984} and  \cite{Preston1986aa} overlap with the results in the present paper, and the authors have endeavoured to point this out.

Throughout this short note, we denote the natural numbers $\{0, 1, \ldots\}$ by
$\N$.  If $n\in \N$ and $n \geq 0$, then the \textit{symmetric group}, denoted \(S_n\), is the group of all permutations
of the set \(\{1, \ldots, n\}\). The \textit{full transformation monoid} is the monoid of all functions from
the set \(\{1, \ldots, n\}\) to itself (called \textit{transformations}) and the semigroup operation is the usual composition of functions.
The \textit{symmetric inverse monoid} \(I_n\) is the set of all bijections between subsets of \(\{1, \ldots, n\}\), with the operation of composition of binary relations. Throughout the remainder of this note we will write functions to the right of their arguments and compose from left to right. 

%We regard $\{1,\ldots,n\}$ as being empty if $n=0$, and assume that $S_0$ and
%$T_0$ have a unique element, the empty map $\varnothing\to\varnothing$.
We distinguish two notions of being ``generated'' by a single element, for monoids, and inverse monoids as follows.
A monoid \(M\) is \textit{monogenic} if there is \(m\in M\) such that \(M=\set{m^n}{n\in \N}\). An inverse monoid $M$ is \textit{a monogenic inverse monoid} if there exists $m\in M$ such that $M$ is the monoid generated by $m$ and $m ^ {-1}$.
Note that in a monogenic monoid $M$, or inverse monoid, the identity is $m ^ 0$ for every $m \in M$.
For finite groups, these two definitions coincide, and so a monogenic finite group is just a \textit{cyclic group}. We elaborate why we have chosen to consider monoids rather semigroups after stating the main theorem below.

Let $\mathbf{s},  \mathbf{t}, \mathbf{i} \colon \N \to \N$ be defined by 
\begin{align*}
  (n)\mathbf{s} & = \text{ the number of non-isomorphic cyclic subgroups of }
  S_n\\
  (n)\mathbf{t} & = \text{ the number of non-isomorphic monogenic submonoids of } T_n\\
 % (n)\mathbf{i} & = \text{ the number of non-isomorphic monogenic inverse subsemigroups of } I_n.\\
 % (n)\mathbf{t} & = \text{ the number of non-isomorphic monogenic subsemigroups 
 % of } T_n \\
  (n)\mathbf{i} & = \text{ the number of non-isomorphic monogenic inverse submonoids of } I_n.
\end{align*}
Since cyclic groups are determined up to isomorphism by their size, it follows
that $(n)\mathbf{s}$ is the number of distinct orders of elements in $S_n$; see~\cite{A030449} and \cref{table-1} for some values for $(n)\mathbf{s}$. Note that we will follow the convention that $(1)\s = 1$ (and $(0)\s = 1$ by virtue of the trivial group being cyclic).
A \textit{partition} of $n\in \N$ is a $k$-tuple
$(a_1, \ldots, a_k)$, where $k \geq 0$, $a_1\geq\cdots\geq a_k\geq1$ and
$a_1 + \cdots + a_k = n$. 
For instance, the partitions of $5$ are:
$$ 1 + 1 + 1 + 1 + 1, \quad 2 + 1 + 1 + 1, \quad 2 + 2 + 1,\quad 3 + 1 + 1,\quad 3 + 2,\quad 4 + 1,\quad
5.$$
There is a unique partition of $0$, namely, the empty partition $\varnothing$.
We use the standard conventions that an empty sum equals $0$ and an empty
product equals $1$; in particular, $\lcm(\varnothing)=1$.  

The order of a permutation $f\in S_n$ is just the least common multiple of the
lengths of its cycles, and so $(n)\s$ is the size of the set
$\set{\lcm(a_1, a_2, \ldots, a_k)}{a_1 + \cdots + a_k = n}$.

The purpose of this short note is to prove the following result.
\begin{thm}\label{thm-main}
  Let $n\in \N$ such that $n > 1$. Then \[
  (n)\mathbf{t}=\sum_{k = 1} ^ {n}(k)\mathbf{s}\quad \text{ and }\quad
  %\[
  %(n)\mathbf{t}= (n)\mathbf{s} + \sum_{k = 1} ^ {n -2} (k)\mathbf{s}= (n)\s+ \m(n-2)
  %= \mathbf{m}(n) - \mathbf{s}(n - 1)\]
  %and 
 (n)\mathbf{i}= \sum_{k = 0} ^ {n } (k)\mathbf{s} 
                 = (n)\mathbf{t} + 1.\]
  %\[(n)\mathbf{i}= (n)\mathbf{s} + \sum_{k = 0} ^ {n -2} (k)\mathbf{s} 
  %               = (n)\mathbf{t} + 1.\]
\end{thm}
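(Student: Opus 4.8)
The plan is to prove both equalities by the same three-step scheme: first, recall a complete isomorphism invariant for the finite monogenic (inverse) monoids in question; second, read that invariant off from the cycle-and-chain structure of a generating (partial) transformation and decide exactly which invariants occur inside $T_n$, respectively $I_n$; and third, count the realisable invariants. Throughout I would write $\gamma(p)$ for the least $m$ such that $S_m$ has an element of order $p$; equivalently $\gamma(p)$ is the least $m$ admitting a partition with $\lcm$ equal to $p$. By the description of $(n)\s$ in the introduction, $(m)\s=\#\set{p\geq 1}{\gamma(p)\leq m}$ for every $m\geq 1$, and the convention $(0)\s=1$ records the empty cyclic part (period $1$ on no points).

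For the equality involving $\t$ I would use the classical fact that a finite monogenic monoid is determined up to isomorphism by its index $i\geq 0$ and period $p\geq 1$, and that for $f\in T_n$ these are visible in the functional digraph: $p$ is the $\lcm$ of the lengths of the cycles met by a large power of $f$, and $i$ is the length of the longest path feeding into such a cycle. A permutation part of order $p$ can be placed on as few as $\gamma(p)$ points and padded with fixed points, while a tail realising index $i\geq 1$ costs a further $i$ points and must run into at least one recurrent point. Thus $(i,p)$ occurs in $T_n$ exactly when $\gamma(p)\leq n$ for $i=0$, and when $\gamma(p)\leq n-i$ for $i\geq 1$. Summing, the index-$0$ classes (the cyclic subgroups) contribute $(n)\s$, and those with $i\geq 1$ contribute $\sum_{i=1}^{n-1}(n-i)\s=\sum_{k=1}^{n-1}(k)\s$, giving $(n)\t=\sum_{k=1}^{n}(k)\s$.

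For the inverse monoids I would lean on Preston's classification and on the fact that a partial bijection $m\in I_n$ decomposes into disjoint chains and cycles, that the generated inverse submonoid necessarily contains $1_{I_n}$, and that on the recurrent points $m$ acts as a permutation. The crucial structural claim, which I expect to be the main obstacle, is that the isomorphism type of $\langle m,m^{-1}\rangle$ depends only on the period $p=\lcm$ of the cycle lengths, on the length $\ell$ (in edges) of the \emph{longest} chain, and on the single Boolean datum of whether $\dom(m)\cup\im(m)=\{1,\dots,n\}$. The three ingredients to verify are: shorter chains and any cycle data beyond its $\lcm$ add no elements, since as the displacement of a word grows the shorter chains leave the domain no later than the longest one; a fixed point behaves exactly as the zero of the chain part, so chain types are insensitive to whether the support is all of $\{1,\dots,n\}$; and when $m$ is a permutation $(\ell=0)$ the monoid is the cyclic group $C_p$ if $\dom(m)=\{1,\dots,n\}$ and is $C_p$ with the extra top idempotent $1_{I_n}$ adjoined otherwise. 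The cleanest route I can see is to describe $\langle m,m^{-1}\rangle$ explicitly as ``shifts on subintervals'' of the longest chain glued to the $C_p$-action on the cyclic points, and to check that the additional chains and cycles embed without creating new $\mathscr{D}$-classes.

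Granting this classification the count is routine bookkeeping. The group types $C_p$ need a permutation of order $p$ using all $n$ points, so they occur exactly when $\gamma(p)\leq n$, contributing $(n)\s$; the group-with-top types need order $p$ on a proper subset, i.e.\ $\gamma(p)\leq n-1$, contributing $(n-1)\s$; and a chain type $(\ell,p)$ with $\ell\geq 1$ needs $\ell+1$ points for the longest chain and $\gamma(p)$ points (none when $p=1$) for the cyclic part, hence occurs exactly when $\ell+1+\gamma(p)\leq n$, so that summing over $\ell\geq 1$ gives $\sum_{\ell=1}^{n-1}(n-\ell-1)\s=\sum_{k=0}^{n-2}(k)\s$. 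Adding the three contributions would yield
\[
(n)\i=(n)\s+(n-1)\s+\sum_{k=0}^{n-2}(k)\s=\sum_{k=0}^{n}(k)\s ,
\]
and since the first equality gives $\sum_{k=1}^{n}(k)\s=(n)\t$ while $(0)\s=1$, this is precisely $(n)\t+1$, as required.
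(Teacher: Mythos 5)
Your proposal is correct and follows essentially the paper's own route: the invariants you use are exactly those of \cref{lem-monogenic} and of \cref{cor-chain-cycle} together with \cref{thm-isomorphism} (your triple $(\ell, p, \text{Boolean})$ is just the paper's parametrisation by a chain on $a$ points plus a permutation of order $p$, with $a = 0$ versus $a = 1$ playing the role of your Boolean and $a = \ell + 1$ for $\ell \geq 1$), and your counts $(n)\mathbf{s} + (n-1)\mathbf{s} + \sum_{k=0}^{n-2}(k)\mathbf{s}$ and $(n)\mathbf{s} + \sum_{i=1}^{n-1}(n-i)\mathbf{s}$ are the paper's sums $\sum_{i=0}^{n}(n-i)\mathbf{s}$ and $\sum_{i=0}^{n-1}(n-i)\mathbf{s}$ after reindexing, with the structural classification you defer to Preston being precisely what the paper's Section 3 proves. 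One minor slip: your criterion for $i \geq 1$, namely $\gamma(p) \leq n-i$, would as literally stated admit the impossible pair $(i,p) = (n,1)$ (since a threshold-$i$ tail must terminate at a recurrent point, the true condition is $i + \max(\gamma(p), 1) \leq n$, and thresholds in $T_n$ never exceed $n-1$), but your own remark that the tail ``must run into at least one recurrent point'' and your summation range $1 \leq i \leq n-1$ already enforce the correct condition, so the final count is unaffected.
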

See \cref{table-1} for the values of $(n)\mathbf{s}$, $(n)\mathbf{t}$, and $(n)\mathbf{i}$ for some small values of $n$. 

\begin{table}[h]
\begin{tabular}{r|r|r|r|r|r|r|r|r|r|r|r|r|r|r|r|r|r|r|r|r}
$n$ &0& 1 & 2 & 3 & 4 & 5 & 6 & 7 & 8 & 9 & 10 & 11 & 12 & 13 & 14 & 15 & 16 & 17 &
18 & 19 \\\hline
$(n)\mathbf{s}$&1 & 1& 2& 3& 4& 6& 6& 9& 11& 14& 16& 20& 23& 27& 31& 35& 43& 47& 55& 61  \\\hline
$(n)\mathbf{t}$ &1 & 1 & 3 & 6 & 10 & 16 & 22 & 31 & 42 & 56 & 72 & 92 & 115 & 142 & 173 & 208 & 251 & 298 & 353 & 414\\\hline
%$(n)\mathbf{i}$ &1 & 1 & 3 & 5 & 8 & 13 & 17 & 26 & 34 & 46 & 59 & 77 & 96 & 120 & 147 & 178 & 217 & 256 & 307 &360\\\hline
%$(n)\mathbf{t}$&1 & 1 & 2 & 4 & 7 & 12 & 16 & 25 & 33 & 45 & 58 & 76 & 95 & 119 & 146 & 177 & 216 & 255 & 306 &359 \\\hline
$(n)\mathbf{i}$ &1 & 2 & 4 & 7 & 11 & 17 & 23 & 32 & 43 & 57 & 73 & 93 & 116 & 143 & 174 & 209 & 252 & 299 & 354 & 415\\
\end{tabular}
\caption{The values of the functions $(n)\mathbf{s}$,  $(n)\mathbf{t}$, and $(n)\mathbf{i}$ for some small
values of $n$.}
\label{table-1}
\end{table}

A semigroup \(S\) is \textit{monogenic} if there is \(s\in S\) such that \(S=\set{s^n}{n>0, n\in \N}\).  As such a monogenic subsemigroup $S$ of $T_n$ contains the identity transformation $1_n\in T_n$; if and only if $S$ is a monogenic submonoid of $T_N$ (in this case $S$ is a cyclic group).
%If $S$ is a subsemigroup of $T_n$ not containing the identity transformation $1_n$, then $S ^ 1 = S\sqcup \{1_n\}$ is a submonoid of $T_n$.

It follows that every monogenic submonoid $M$ of $T_n$ is either a a cyclic group; or $M \cap T_n\setminus S_n$ is a monogenic subsemigroup and $M \cap S_n = \{1_n\}$. Conversely, every monogenic subsemigroup of \(T_n\) is either a monogenic submonoid of \(T_n\) (and hence a group) or can be obtained from a monogenic submonoid by removing the identity element. Thus the number of monogenic subsemigroups of $T_n$ up to isomorphism is the number of cyclic subgroups of $S_n$ plus the number of monogenic subsemigroups of $T_n\setminus S_n$ that are not groups. 
As we will see later, the number of monogenic subsemigroups of  $T_n\setminus S_n$, that are groups is \((n - 1)\mathbf{s}\). Analogous comments hold for monogenic inverse subsemigroups and inverse submonoids of $I_n$. From this discussion we obtain the following corollary to \cref{thm-main}.

\begin{cor}
  The number of monogenic subsemigroups of \(T_n\) equals \((n)\mathbf{t} - (n - 1)\mathbf{s}\) up to isomorphism.
  The number of monogenic inverse subsemigroups of $I_n$, up to isomorphism, is \((n)\i - (n - 1)\mathbf{s}\).
\end{cor}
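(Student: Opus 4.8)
The plan is to express both $(n)\mathbf{t}$ and the number we want to count as sums over the same structural pieces, and then to solve for the latter. Write $N_T$ for the number of isomorphism types of monogenic subsemigroups of $T_n$, and let $g$ and $h$ denote the numbers of isomorphism types of monogenic subsemigroups of $T_n \setminus S_n$ that are, respectively, groups and non-groups. Up to isomorphism, a monogenic subsemigroup of $T_n$ that is a group is a cyclic group $C_d$ whose order $d$ is the order of some group element of $T_n$, equivalently the order of a permutation on at most $n$ points; these are exactly the cyclic subgroups of $S_n$, of which there are $(n)\mathbf{s}$ up to isomorphism. Since every element of $S_n$ has finite order and hence generates a cyclic group, a non-group monogenic subsemigroup of $T_n$ must have its generator in $T_n \setminus S_n$; thus the non-group monogenic subsemigroups of $T_n$ are precisely those of $T_n \setminus S_n$, and so $N_T = (n)\mathbf{s} + h$.

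The next step is to rewrite $(n)\mathbf{t}$ through $g$ and $h$. Here I would verify that adjoining the identity $1_n$ induces a bijection, at the level of isomorphism classes, between the monogenic subsemigroups $S$ of $T_n \setminus S_n$ and the non-group monogenic submonoids of $T_n$: for such an $S$ the set $S \cup \{1_n\}$ is a monogenic submonoid of $T_n$ that is not a group (its generator is not a unit), while conversely every non-group monogenic submonoid $M$ satisfies $M \cap S_n = \{1_n\}$ and $M \setminus \{1_n\} = M \cap (T_n \setminus S_n)$, the latter being a monogenic subsemigroup of $T_n \setminus S_n$. This descends to a bijection on isomorphism classes because any isomorphism of monoids carries identity to identity, so $M_1 \cong M_2$ if and only if $M_1 \setminus \{1_n\} \cong M_2 \setminus \{1_n\}$. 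Combined with the fact that the group monogenic submonoids of $T_n$ are, up to isomorphism, the $(n)\mathbf{s}$ cyclic subgroups of $S_n$, this yields $(n)\mathbf{t} = (n)\mathbf{s} + (g + h)$.

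It then remains to compute $g$ and to assemble the pieces. A group monogenic subsemigroup of $T_n \setminus S_n$ is a cyclic group whose order is that of the permutation induced by a rank-$k$ group element on its image, for some $1 \le k \le n-1$; as $k$ runs through $1, \ldots, n-1$ these orders sweep out exactly the element orders of $S_{n-1}$, using the nesting $\{\text{orders in } S_1\} \subseteq \cdots \subseteq \{\text{orders in } S_{n-1}\}$, so $g = (n-1)\mathbf{s}$. Substituting into $(n)\mathbf{t} = (n)\mathbf{s} + g + h$ gives $h = (n)\mathbf{t} - (n)\mathbf{s} - (n-1)\mathbf{s}$, whence $N_T = (n)\mathbf{s} + h = (n)\mathbf{t} - (n-1)\mathbf{s}$, as claimed. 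The inverse-monoid statement follows by running the identical bookkeeping with $I_n$ and $(n)\mathbf{i}$ in place of $T_n$ and $(n)\mathbf{t}$: the group of units of $I_n$ is again $S_n$, its idempotents are the partial identities $1_X$ for $X \subseteq \{1, \ldots, n\}$, a group monogenic inverse subsemigroup is a cyclic group acting on some such $X$, and the same rank-based counts ($(n)\mathbf{s}$ group types overall and $(n-1)\mathbf{s}$ inside $I_n \setminus S_n$) carry over unchanged.

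I expect the main obstacle to be justifying the adjoin-identity correspondence as a genuine bijection on isomorphism classes, and in particular its injectivity, which relies on the fact that any monoid isomorphism must match the adjoined copies of $1_n$; a secondary point is confirming that the count of group types collapses to $(n-1)\mathbf{s}$ rather than a larger sum over ranks, for which both directions of the equality — that every element order of $S_{n-1}$ is realised by a rank-$(n-1)$ group element, and that no larger order occurs — must be checked.
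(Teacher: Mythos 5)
Your proposal is correct and takes essentially the same route as the paper: the paper's justification (the discussion immediately preceding the corollary) likewise splits the monogenic subsemigroups into group and non-group isomorphism types, identifies the monogenic subsemigroups of $T_n\setminus S_n$ with the non-group monogenic submonoids of $T_n$ via adjoining/removing $1_n$, and counts the group types inside $T_n\setminus S_n$ as $(n-1)\mathbf{s}$ (with the analogous bookkeeping for $I_n$). Your version merely makes explicit the details the paper leaves implicit, such as the injectivity of the adjoin-identity correspondence on isomorphism classes.
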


%{\color{red}
%There is a natural injection from (isomorphism types of) submonoids of \(T_n\) to inverse submonoids of \(I_n\) which takes the lollipop part and removes the pair connecting the stick to the tasty bit, this is almost a bijection. When we have at least 1 point this is not surjective as it misses the inverse monoid isomorphic to the one generated by the partial bijection \(\{(x, x+1):1\leq x\leq n-2\}\)}

There is a natural injection from isomorphism types of submonoids of \(T_n\) to inverse submonoids of \(I_n\) defined by
mapping a generating transformation to a generating partial permutation with the same period and threshold. This is
almost a bijection, however it does not map onto any element of threshold \(n\) as \(T_n\) contains no such element. This
element is unique up to the isomorphism type of a submonoid it generates.

\section{Monogenic transformation monoids}

In this section, we collect a small number of facts about monogenic transformation monoids that we require to prove \cref{thm-main}.

If $M$ is a finite monoid and $x\in M$, then the
\textit{threshold} $t\in \N$ and \textit{period} $p\in \N$ of $x$ are the
least values such that $p > 0$ and $x ^ {t + p} = x^{t}$. 

\begin{lem}\label{lem-monogenic}
  Let $M$ be a monoid and let $a, b\in M$. Then the monogenic submonoids of $M$ generated by $a$ and $b$, respectively, are isomorphic if and only if the threshold and period of $a$ equal
  those of $b$. 
\end{lem}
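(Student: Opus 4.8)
The plan is to work with the standard normal form for a finite monogenic monoid. If $a$ has threshold $t$ and period $p$, then $\langle a\rangle=\{a^0,a^1,\ldots,a^{t+p-1}\}$, these $t+p$ powers are pairwise distinct, and multiplication is governed entirely by the reduction rule $a^m=a^{\,t+((m-t)\bmod p)}$ for all $m\ge t$. The decisive feature is that this rule depends only on the pair $(t,p)$ and not on $a$ itself. I would assume throughout that $a$ and $b$ generate finite submonoids, so that their thresholds and periods are defined; otherwise the submonoids are infinite and the comparison is vacuous. With the normal form in place, the forward (``if'') direction is a routine verification, and the converse carries the real content.

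For the ``if'' direction, suppose $a$ and $b$ share threshold $t$ and period $p$. Define $\phi\colon\langle a\rangle\to\langle b\rangle$ by $(a^i)\phi=b^i$ for $0\le i\le t+p-1$. Since each monoid has exactly $t+p$ elements listed without repetition in this way, $\phi$ is a bijection. To see it is a homomorphism, note that for all $i,j\ge 0$ the product $a^ia^j=a^{i+j}$ reduces to normal form using only the rule above, and since $b$ obeys the identical rule, $(a^ia^j)\phi=b^{i+j}=b^i b^j=(a^i)\phi\,(a^j)\phi$; it also sends $a^0$ to $b^0$, so identities are preserved.

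For the ``only if'' direction, suppose $\langle a\rangle\cong\langle b\rangle$ with parameters $(t,p)$ and $(t',p')$. First, the orders must agree, giving $t+p=t'+p'$. The key step, and the point I expect to be the main obstacle, is recovering the period as an isomorphism invariant: I claim the minimal ideal (kernel) of $\langle a\rangle$ is exactly the cyclic part $\{a^t,\ldots,a^{t+p-1}\}$, which is a cyclic group of order $p$. This set is an ideal, and every nonempty ideal contains it, since multiplying any $a^i$ by a suitable power lands in the cyclic part, and that part is a group and hence absorbs all further multiplication. A monoid isomorphism carries the minimal ideal onto the minimal ideal and restricts to a group isomorphism between them; as cyclic groups are determined up to isomorphism by their order, we obtain $p=p'$, whence $t=(t+p)-p=(t'+p')-p'=t'$. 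The work lies entirely in confirming that the cyclic part is genuinely the minimal ideal, so that $p$ is an intrinsic invariant rather than an artifact of the chosen generator; once that is established, equality of both parameters is immediate.
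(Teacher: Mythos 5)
Your proof is correct, and it is more complete than what the paper provides. For the ``if'' direction you take precisely the route the paper sketches in its one-line remark after the lemma: the map $a^i \mapsto b^i$ extends to an isomorphism because the multiplication in $\genset{a}$ is governed entirely by the reduction rule $a^m = a^{t + ((m-t) \bmod p)}$ for $m \geq t$, which depends only on the pair $(t,p)$; your verification that this map is a well-defined bijective homomorphism is the detail the paper declares ``not difficult'' and omits. For the ``only if'' direction the paper gives no argument whatsoever, and this is where your proposal adds genuine content: you recover $p$ as an isomorphism invariant by identifying the cyclic part $\{a^t, \ldots, a^{t+p-1}\}$ as the minimal ideal (kernel) of $\genset{a}$, observing it is a cyclic group of order $p$, and using the facts that any isomorphism carries kernel to kernel and that finite cyclic groups are classified by order; combined with $|\genset{a}| = t + p$ this yields $t = t'$ as well. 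Your supporting claims check out: the powers $a^0, \ldots, a^{t+p-1}$ are pairwise distinct by minimality of $(t,p)$, the cyclic part is an ideal since any exponent $\geq t$ reduces back into it, and every nonempty ideal meets it (multiply by a high power of $a$) and then contains all of it (further powers of $a$ cycle through it). Your standing assumption that $\genset{a}$ and $\genset{b}$ are finite is also the right reading, since the paper only defines threshold and period for elements of finite monoids. In short: same idea as the paper where the paper has one, plus a correct, standard kernel argument for the converse that the paper leaves to the reader.
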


To prove Lemma~\ref{lem-monogenic}, it is not difficult to show the unique homomorphism extending the map $a\mapsto b$ is an isomorphism whenever the thresholds and periods of $a$ and $b$ coincide.
A special case of Lemma~\ref{lem-monogenic} is when $M = S_n$, where the lemma
asserts that $\genset{a}$ and $\genset{b}$ are isomorphic if and only if
$|\genset{a}|= |\genset{b}|$, as mentioned above.

Recall that a \textit{digraph} $\Gamma$ is a pair $(V, E)$ consisting of a
\textit{vertex set} $V$ and an \textit{edge set} $E \subseteq V\times V$. A
digraph is \textit{functional} if for every $u\in V$ there exists a unique
$v\in V$ such that $(u, v) \in E$. Suppose that $D_n$ denotes the set of
functional digraphs with vertex set $\{1,\ldots,n\}$. It is
straightforward to verify that the function mapping $f$ to the functional
digraph $\Gamma_f$ with vertices $V = \{1, \ldots, n\}$ and edges $E=\set{(v,
(v)f)}{v\in V}$ is a bijection. We give an example of a functional digraph of
a transformation in Figure~\ref{fig:functional-digraph}.

\begin{figure}
    \centering
      \newcommand{\midarrow}{\tikz \draw[angle 90-] (0,0) -- +(.1,0);}
      \begin{tikzpicture}[scale=0.6]
        \node[] (1) at (-2.5, 0) {$1$};
        \node[] (2) at (2.5, 0) {$2$};
        \node[] (3) at (0, -4.75) {$3$};

        \node[] (4) at (-6, 0) {$4$};
        \node[] (7) at (-8.5, -2.5) {$7$};
        \node[] (8) at (-8.5, 2.5) {$8$};

        \node[] (5) at (6, 0) {$5$};
        \node[] (6) at (8.5, -2.5) {$6$};
        \node[] (9) at (8.5, 2.5) {$9$};
        
        \node[] (10) at (2.5, 3.5) {$10$};
        \node[] (11) at (2.5, 7) {$11$};
        
     \draw[-angle 90] (1) to [bend left] node[left] {} (2) ;
     \draw[-angle 90] (2) to [bend left] node[right] {} (3);
     \draw[-angle 90] (3) to [bend left] node[below] {} (1);
     \draw[-angle 90] (8) to (4);
     \draw[-angle 90] (7) to (4);
     \draw[-angle 90] (4) to (1);
     \draw[-angle 90] (11) to (10);
     \draw[-angle 90] (10) to (2);
     \draw[-angle 90] (5) to (2);
     \draw[-angle 90] (9) to (5);
     \draw[-angle 90] (6) to (5);
          
      \end{tikzpicture}
    \caption{Functional digraph of a transformation on \(11\) points with threshold \(2\)
    and period \(3\).}
    \label{fig:functional-digraph}
\end{figure}
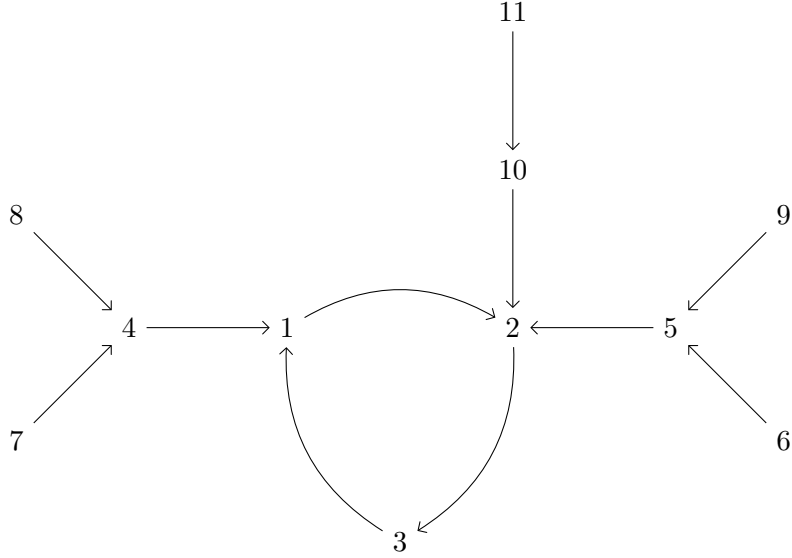

\begin{lem}
  Let \(n, p, t \in \mathbb{N}\) be such that $n> 0$ and $p> 0$. Then \(t\)
  and \(p\) are the threshold and period of an element of \(T_n\) if and only if there exists \(m \in \mathbb{N} \setminus \{0\}\) with $m\leq n$,
  such that \(p\) is the order of an element of \(S_m\), and \(t \in \{0, \ldots, n - m\}\).
\end{lem}

\begin{proof}
  The period of a transformation $f\in T_n$ is equal to the least common
  multiple $p$ of the lengths of the cycles in the digraph $\Gamma_f$, and the threshold
  of $f$ is equal to the maximal number $t$ of edges in a path $(x_0, x_1, \ldots, x_{t})$ where the vertex $x_{t}$ belongs to a cycle but the vertices $\{x_0, \ldots, x_{t - 1}\}$ do not.  In particular, the threshold of $f\in T_n$ is between $0$ and $n - 1$, inclusive. If $g \in S_m$ is any permutation, then there exists a
  transformation $f\in T_n$ such that the period of $f$ equals the order \(p\) of $g$
  and the threshold of $f$ is any value in $t \in \{0, \ldots, n - m\}$;
  one such transformation is
\begin{equation*}
  (i)f = 
  \begin{cases}
    (i)g  & \text{if } 1 \leq i \leq m    \\
    i - 1 & \text{if } m + 1\leq i \leq m + t \\
    i     & \text{if } i > m + t.
  \end{cases}
\end{equation*}
This transformation has threshold \(t\) and period \(p\), since
\begin{equation*}
(i)f^{t + p} = 
\begin{cases}
(i) g^{t + p} = (i) g^t = (i)f^k & \text{ if } 1 \leq i \leq m \\
     ((i)f^{i-m})f^{p + t-(i-m)} = (m)f^{p + t -(i-m)} = (m)f^{t-(i-m)}=(i)f^t & \text{ if } m + 1 \leq
i \leq m + t  \\
 (i)f^{t + p} = i = (i)f^t & \text{ if } i > m + t  
\end{cases}
\end{equation*}
shows that the threshold and period are at most \(t\) and \(p\), respectively. As \(f\) restricts to
a permutation of order \(p\), the period of \(f\) can be no less than \(p\), and so the period of
\(f\) equals \(p\).

If \(x<t\) and \(t> 0\), then \((m+t)f^{x} \notin \{1, 2, \ldots, m\} \), \((m+t)f^{x+pt}\in \{1, 2, \ldots, m\} \). In particular, \((m + t)f^x \neq
(m + t)f^{x + p}\), and so the threshold of \(f\) does not equal \(x\), and therefore it must equal \(t\).
\end{proof}

\section{Monogenic inverse monoids of partial permutations}

We now consider monogenic inverse submonoids of
\(I_n\). Many of the results in this section have
analogous or equivalent results in \cite{Preston1986aa}.
We include proofs for completeness.

Throughout this section we consider the monogenic inverse monoid $S_{n, k}$ defined by the following inverse monoid presentation (i.e. $S_{n, k}$ is isomorphic to the quotient of the monogenic free inverse monoid by the least congruence containing the relations):
\begin{equation}\label{eq-presentation}
\mathcal{P}_{n, k} = \InvPres{x}{x^nx^{-n} =  x^{n+1}x^{-(n+1)},  x^nx^{-n}= x^nx^{-n}x^k }
\end{equation}
for an arbitrary but fixed $k, n \in \N$ with $k > 0$.

By \cite{Munn1974}, elements of the free inverse monoid are uniquely determined by the corresponding \textit{Munn tree}. 
The Munn tree of an element of the monogenic free inverse monoid
can be defined for any $a, b, c\in \N$ such that $a, c \leq b$ to be 
the chain of length $b$ where the initial state is the node at distance $a$ from the start of the chain, and the terminal node is distance $c$ from the start. Hence every element of a monogenic free inverse monoid is represented by a word of the form:
  \[x^{-a}x^bx^{-b}x^c\in \{x, x ^{ -1}\} ^ *\]
where \(a, b, c\in \N\) and \(a, c\leq b\) and $\{x, x ^{ -1}\} ^ *$ is the free monoid over the alphabet $\{x, x ^{ -1}\}$. We give an example of a Munn tree in Figure~\ref{fig:munn_tree}.

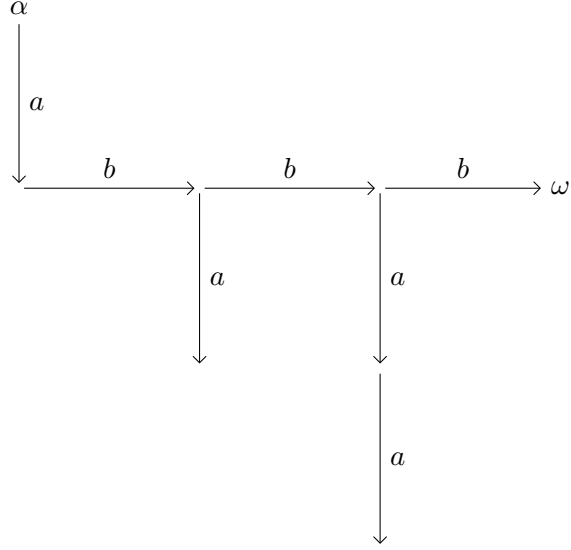
\begin{figure}
    \centering
          \begin{tikzpicture}
          [scale=0.8, rotate=270]
        \node (1) at (0, 0) {$\alpha$};
        \node[scale=0.5]  (a) at (3, 0) {};
        \node[scale=0.5]  (ab) at (3, 3) {};
        \node[scale=0.5]  (aba) at (6, 3) {};
        \node[scale=0.5]  (abb) at (3, 6) {};
        \node[scale=0.5]  (abba) at (6, 6) {};
        \node[scale=0.5]  (abbaa) at (9, 6) {};
        \node (abbb) at (3, 9) {\(\omega\)};

        %\draw[-angle 90,shorten >=0.1cm] (1) ++(0,0.75) -- (1);
        \draw[-angle 90] (1) edge ["$a$"] (a);
        \draw[-angle 90] (a) edge ["$b$"] (ab);
        \draw[-angle 90] (ab) edge ["$a$"]  (aba);

        \draw[-angle 90] (ab) edge["$b$"] (abb);
        \draw[-angle 90] (abb) edge ["$a$"] (abba);
        \draw[-angle 90] (abba)edge ["$a$"] (abbaa);
        \draw[-angle 90] (abb) edge["$b$"] (abbb);

        %\draw[angle 90-, shorten >=0.1cm] (abbb) ++(0,0.75) -- (abbb);
      \end{tikzpicture}
    \caption{Munn tree for the free inverse monoid element $abaa^{-1}baaa^{-1}a^{-1}b$ with start
    \(\alpha\) and end \(\omega\).}
    \label{fig:munn_tree}
\end{figure}

%\begin{rmk}
%  The inverse semigroup generated by a partial permutation
%  \[[1, 2, \ldots , n](n+1, n+2, \ldots, n+k)\]
%  is a quotient of the inverse semigroup presented by
%  \[\langle x| x^nx^{-n} =  x^{n+1}x^{-(n+1)},  x^nx^{-n}= x^nx^{-n}x^k \rangle_{inv}.\]
%\end{rmk}
We start by defining a set of representative words for
each element, which we use to classify all finite
monogenic inverse monoids.

\begin{lem}[see also Proposition 4 of
\cite{Dyadchenko1984}]
   \label{inv_sgp_pres_lem}
   The set 
   \[ W:=\{x^{-a}x^bx^{-b}x^c \in \{x, x ^{ -1}\} ^ *: 0\leq a, c \leq b < n\}
   \cup \{x^{-a}x^nx^{-n}x^c \in \{x, x ^{ -1}\} ^ *: 0\leq a, c < k\}\]
   contains representatives for all elements of $S_{n, k}$. That is to say, if \(FIM(\{x\})\) is the free inverse monoid on \(\{x\}\), and \(\Psi: \{x , x ^ {-1}\} ^ {*} \to FIM(\{x\})\), \(\Phi: FIM(\{x\}) \to S_{n,k}\) are the natural homomorphisms, 
   then \((\Psi\Phi)\restriction_W\) is surjective.
\end{lem}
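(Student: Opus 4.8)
The plan is to reduce the statement to a rewriting problem inside $S_{n,k}$. Since $\Psi$ and $\Phi$ are both surjective, so is $\Psi\Phi$, and by Munn's theorem every element of $FIM(\{x\})$ is represented by a canonical word $x^{-a}x^bx^{-b}x^c$ with $a,c\le b$. Hence every element of $S_{n,k}$ equals $(x^{-a}x^bx^{-b}x^c)\Psi\Phi$ for some such $a,b,c$, and it suffices to show that each such canonical word has the same image under $\Psi\Phi$ as some word in $W$; equivalently, that in $S_{n,k}$ every $x^{-a}x^bx^{-b}x^c$ with $a,c\le b$ is equal to a word lying in $W$.

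First I would extract two families of consequences of the defining relations. From the first relation $x^nx^{-n}=x^{n+1}x^{-(n+1)}$, left-multiplying by $x$ and right-multiplying by $x^{-1}$ yields $x^{m}x^{-m}=x^{m+1}x^{-(m+1)}$ for every $m\ge n$, and hence by induction the truncation identity
\[ x^bx^{-b}=x^nx^{-n}\qquad\text{for all } b\ge n. \]
From the second relation $x^nx^{-n}=x^nx^{-n}x^k$, right-multiplying by $x^j$ gives $x^nx^{-n}x^{j+k}=x^nx^{-n}x^{j}$, so by induction $x^nx^{-n}x^c=x^nx^{-n}x^{c\bmod k}$; taking inverses of the second relation gives $x^nx^{-n}=x^{-k}x^nx^{-n}$, and left-multiplying by $x^{-j}$ yields analogously $x^{-a}x^nx^{-n}=x^{-(a\bmod k)}x^nx^{-n}$. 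These say that an exponent adjacent to the idempotent $x^nx^{-n}$ may be reduced modulo $k$.

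With these in hand the rewriting is immediate. If $b<n$, then $x^{-a}x^bx^{-b}x^c$ already lies in the first set of $W$, since $a,c\le b<n$. If $b\ge n$, then applying the truncation identity to the central idempotent,
\[ x^{-a}x^bx^{-b}x^c=x^{-a}(x^bx^{-b})x^c=x^{-a}x^nx^{-n}x^c, \]
and then reducing $c$ and $a$ modulo $k$ by the two periodicity identities gives
\[ x^{-a}x^nx^{-n}x^c=x^{-(a\bmod k)}x^nx^{-n}x^{c\bmod k}, \]
which lies in the second set of $W$ because $0\le a\bmod k,\ c\bmod k<k$. This exhausts all canonical words, so $(\Psi\Phi)\restriction_W$ is surjective.

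I expect the only subtle point to be the derivation of the truncation identity for all $b\ge n$ from the single relation at $b=n$: the relation must first be propagated upward by the multiplication $u\mapsto xux^{-1}$ before the induction can proceed. Everything else is a routine substitution of equals in the quotient $S_{n,k}$, and the two ``reduce modulo $k$'' identities follow symmetrically from the second relation and its inverse.
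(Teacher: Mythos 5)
Your proof is correct and follows essentially the same route as the paper: reduce via the Munn normal form $x^{-a}x^bx^{-b}x^c$, truncate the middle idempotent to $x^nx^{-n}$ when $b\geq n$, and reduce $a$ and $c$ modulo $k$ using the second relation and its inverse. The only difference is that you spell out the inductive propagation $x^bx^{-b}=x^nx^{-n}$ for all $b\geq n$ via conjugation by $x$, which the paper compresses into the phrase ``again using the given relations.''
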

\begin{proof}
  As mentioned above, every element of  $S_{n, k}$, can be given as \(x^{-a}x^bx^{-b}x^c\) where \(a, c\leq b\). 
  If $b < n$, then \(x^{-a}x^bx^{-b}x^c\) belongs to the first set in the union in the statement of the lemma. 
  
  By the relations in \eqref{eq-presentation}, we know that 
  \(x^nx^{-n}= x^nx^{-n}x^k\) and 
  by inverting this it follows that
  \(x^nx^{-n}= x^{-k}x^nx^{-n}\).
  Hence, again using the given relations, if \(b\geq n\), then
  \[x^{-a}x^bx^{-b}x^c=x^{-a\text{ mod }k}x^{n}x^{-n}x^{c\text{ mod }k}.\]
  In particular, if $b \geq n$ and $a, c \leq b$, then $x^{-a}x^bx^{-b}x^c$ is equivalent to a word in the second set in the union in the statement of the lemma.
  %
  %If $b < n$, then since $a, c \leq b$, the set 
  %\[
  %\{x^{-a}x^bx^{-b}x^c \in \{x, x ^{ -1}\} ^ +: a, c \leq b\} 
  %\]
  %is finite and hence also contains only finitely many %representatives of elements of $S_{n, k}$.
  %Hence 
  %\[
  %\{x^{-a}x^bx^{-b}x^c \in \{x, x ^{ -1}\} ^ + : a, c \leq b \in %\N\}
  %\]
  %contains representatives of finitely many elements of $S_{n, k}$. %Since this set also contains a representative for every element of $S_{n, k}$, $S_{n, k}$ is finite, as required. 
\end{proof}

If $M$ is an inverse monoid, then 
we  denote the monogenic inverse submonoid of $M$ generated an element $f\in M$ by $\genset{f}$. 
An element $f \in I_n$ is called a \textit{chain} if 
the 
there exists $x\in \{1, \ldots, n\}\setminus \im(f)$
such that $\dom(f) = \set{(x)f ^ i}{i\in \{0, \ldots, |\dom (f)| -1\}}$ (and so $\im(f) = \set{(x)f ^ i}{i\in \{1, \ldots, n\}}$). 
The \textit{length} of a chain $f$ is denoted $|f|$ (the cardinality of $f$ as a subset of $\{1, \ldots, n\}\times \{1, \ldots, n\}$).
We denote a chain $f$ by $[x, (x)f, \ldots, (x)f ^ {|f| - 1}]$. Note that we will sometimes implicitly refer to a chain on \(1\) point of (length \(0\)), this should be interpreted as the empty function,
for example in \cref{lem-chain-cycle}, if \(a=1\) then \(x=(2, 3, \ldots, b+1)\).

To classify all monogenic inverse monoids, we must
first classify those generated by chains.

\begin{lem}
  \label{lemma-chain-gen}
  Let $f, g\in I_n$ be chains such that $|f| \geq |g|$. Then there exists a surjective homomorphism $\phi \colon \genset{f} \to \genset{g}$.
\end{lem}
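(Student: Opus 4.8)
The plan is to show that the unique inverse-monoid homomorphism determined by $x\mapsto f$ factors through the one determined by $x\mapsto g$; equivalently, I would check that any two words in $x,x^{-1}$ that evaluate to the same partial permutation when $x=f$ also agree when $x=g$. Granting this, the induced map $\phi\colon\genset{f}\to\genset{g}$ with $f\mapsto g$ is a well-defined homomorphism, and it is surjective because its image is a submonoid of $\genset{g}$ containing both $g$ and $g^{-1}=\phi(f^{-1})$, hence all of $\genset{g}$.

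First I would record the explicit action of the relevant elements. Writing $f=[x_0,x_1,\ldots,x_{\ell}]$ with $\ell=|f|$ and $x_i=(x_0)f^i$, the power $f^b$ sends $x_i\mapsto x_{i+b}$ whenever $i+b\leq\ell$ and is the empty map as soon as $b>\ell$. By \cref{inv_sgp_pres_lem} every element of $\genset{f}$ is represented by a word $x^{-a}x^bx^{-b}x^c$ with $a,c\leq b$, and a short left-to-right composition shows that, evaluated at $x=f$, this word is the empty map when $b>\ell$ and is otherwise the partial bijection with domain $\{x_a,\ldots,x_{a+\ell-b}\}$ sending $x_{a+j}\mapsto x_{c+j}$. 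The key step is the resulting equality criterion: for representatives with $b,b'\leq\ell$ the evaluated partial bijection determines $(a,b,c)$ uniquely, since its domain pins down $a$ and then $b$, and its shift pins down $c$; whereas all representatives with $b,b'>\ell$ evaluate to the empty map. Consequently two words $w,w'$ satisfy $w(f)=w'(f)$ in only two ways: either their Munn canonical forms coincide, so that $w$ and $w'$ already represent the same element of the free inverse monoid and thus agree at $x=g$ too, or both canonical forms have $b,b'>\ell=|f|$. In the latter case, since $|g|\leq|f|<b,b'$, the same computation applied to the shorter chain $g$ gives $w(g)=w'(g)=\varnothing$. Either way $w(f)=w'(f)$ forces $w(g)=w'(g)$, which is exactly what is needed for $\phi$ to be well defined.

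The main obstacle is the equality criterion itself, namely that for a chain the only identification among the representatives of \cref{inv_sgp_pres_lem} is the annihilation $x^{-a}x^bx^{-b}x^c\mapsto\varnothing$ for $b>|f|$, with the surviving representatives pairwise distinct. This rests on the uniqueness of Munn tree normal forms \cite{Munn1974} together with the explicit windowed-shift description above; once it is established, the conclusion is a monotonicity observation exploiting $|g|\leq|f|$. Alternatively one could first identify $\genset{f}\cong S_{|f|+1,\,1}$ (a chain being aperiodic, so $k=1$) and extract a surjection $S_{|f|+1,1}\twoheadrightarrow S_{|g|+1,1}$ from the presentation, but the direct comparison of evaluations avoids computing the full isomorphism type and keeps the argument self-contained.
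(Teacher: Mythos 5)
Your proof is correct, and it takes a genuinely different route from the paper's. The paper reduces without loss of generality (implicitly composing surjections) to the case $f = [1, \ldots, n]$, $g = [1, \ldots, n-1]$, and then lets $\genset{f}$ act by partial permutations on the set of adjacent pairs $X = \{(1,2), \ldots, (n-1,n)\}$: this action coincides with the faithful action of $\genset{g}$ on $X$, so the action map is the desired surjection (the verification is left as ``routine''). You instead argue at the level of congruences: you compute the evaluation of the Munn normal form $x^{-a}x^bx^{-b}x^c$ at a chain of length $\ell$ as a windowed shift with domain $\{x_a,\ldots,x_{a+\ell-b}\}$, empty precisely when $b > \ell$, deduce that the kernel of evaluation at $f$ is contained in the kernel of evaluation at $g$ whenever $|g| \leq |f|$, and factor through. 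This handles all pairs of lengths at once, makes explicit exactly what the paper calls routine, and as a by-product gives the complete equality criterion in $\genset{f}$ (essentially identifying $\genset{f} \cong S_{|f|+1,1}$, anticipating \cref{thm-isomorphism}); the cost is that you must invoke uniqueness of Munn tree normal forms, which the paper's action argument avoids. Two small corrections, neither a gap: the normal form for the free monogenic inverse monoid is the Munn-tree fact stated \emph{before} \cref{inv_sgp_pres_lem} (that lemma concerns the quotients $S_{n,k}$, so it is the wrong citation); and your domain formula requires $b \geq 1$, since the empty word ($a=b=c=0$) evaluates to the identity of $I_n$, whose domain is all of $\{1,\ldots,n\}$ rather than the chain's points --- this is harmless, because that domain has size $n \geq \ell + 1$, strictly larger than the size $\ell - b + 1 \leq \ell$ of any domain arising with $b \geq 1$, so the triple $(a,b,c)$ is still recovered and your case analysis stands.
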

\begin{proof}
Clearly if $|f| = |g|$, then the homomorphism from $\genset{f}$ to $\genset{g}$ mapping $f$ to $g$ is an isomorphism. Suppose that $|f| > |g|$.
We may assume without loss of generality that 
$f = [1, \ldots, n]$  and that $g = [1, \ldots, n-1]$. 
It suffices to show that there exists a set $X$ such that $\genset{g}$ acts faithfully by partial perms on $X$ and that $\genset{f}$ has the same action by partial perms on $X$ as $\genset{g}$. 
We set $X = \{(1, 2), \ldots, (n - 1, n)\}$ and define the actions of $f$ and $g$ by
\[
(a, a+1)f = ((a)f, (a+1)f) 
\quad (b, b+1)g = ((b)g, (b)g+1).
\]
%(Luke commented these conditions out because they are redundant) whenever $a, a+ 1\in \dom(f)$ and for all $b\in \dom(g)$.
It is routine to verify that these two actions are equal. 
\end{proof}

We require the following lemma, which is a special case of \cite[Theorem 7]{Preston1986aa}, we have included a proof here for the sake of completeness.

\begin{lem}\label{cor-chain-cycle}
        If $M$ is a finite monogenic inverse submonoid of \(I_n\), then there exist $a, b\in \N$ such that \(a+b=n\) and
    $M$ is isomorphic to the inverse submonoid of $I_n$ generated by 
    \[x=[1, \ldots, a]\cup p.\]
    where \(p\) is some permutation on the set \(\{a+1, \ldots, b+1\}\). 
    Moreover, the monoid is isomorphic to the submonoid of \(I_{a+|p|}\) generated by \([1, \ldots, a]\cup (a+1, \ldots, a+|p|)\).

%    Moreover, if \(M\leq I_n\), then \(M\cong \langle x \rangle\), for some \(x\in I_n\) which is the disjoint union of a chain and a permutation.
\end{lem}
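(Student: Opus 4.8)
The plan is to reduce an arbitrary monogenic inverse submonoid to the claimed normal form by analysing the orbit structure of a generator and then throwing away all data that does not affect the isomorphism type. Write $M=\genset{f}$ for some $f\in I_n$. Since $f$ is an injective partial map, in the digraph with edge set $\set{(v,(v)f)}{v\in\dom(f)}$ every vertex has in-degree and out-degree at most one, so its connected components are simple paths and simple cycles. Reading these off, $f$ is a disjoint union $f=c_1\cup\cdots\cup c_r\cup\pi$ on pairwise disjoint point sets, where $c_1,\ldots,c_r$ are chains with $|c_1|\geq\cdots\geq|c_r|$ and $\pi$ is the permutation built from the cyclic components, whose order is the $\lcm$ of the cycle lengths.

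The key observation is that relations are detected component-wise. Extend $x\mapsto f$ to the evaluation homomorphism $\{x,x^{-1}\}^*\to I_n$ and let $P_i$ be the point set of the $i$-th component. Because $f$, and hence every power of $f$, preserves each $P_i$, two words $u,v\in\{x,x^{-1}\}^*$ evaluate to the same partial perm at $f$ if and only if they do so on each component separately. Consequently the kernel congruence of the induced map $FIM(\{x\})\to\genset{f}$ is exactly the intersection of the congruences whose quotients are $\genset{c_1},\ldots,\genset{c_r},\genset{\pi}$; equivalently, $\genset{f}$ is the submonoid generated by the diagonal element $(c_1,\ldots,c_r,\pi)$ of $\genset{c_1}\times\cdots\times\genset{c_r}\times\genset{\pi}$. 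Since this description applies to any partial perm written as a disjoint union of chains and cycles, it suffices to decide which components may be deleted or merged without changing the intersection.

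First I would discard the shorter chains. By \cref{lemma-chain-gen} there is a surjective homomorphism $\genset{c_1}\to\genset{c_i}$ sending $c_1\mapsto c_i$ for each $i$, so every relation holding at $c_1$ also holds at $c_i$; hence intersecting the congruence of $c_1$ with that of $\pi$ already yields the full intersection, and $M\cong\genset{c_1\cup\pi}$. Next I would simplify the permutation part. For a permutation $\sigma$ a word $u$ evaluates to a power $\sigma^{e}$ determined by the exponent sum of $u$, so $u$ and $v$ agree at $\sigma$ precisely when their exponent sums are congruent modulo the order of $\sigma$; thus the congruence of $\sigma$ depends only on its order. In particular $\pi$ may be replaced by a single cycle of length equal to its order, and, since a fixed point is a cycle of length $1$, $\pi$ may also be padded by fixed points without altering its order. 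Padding out to all of $\{1,\ldots,n\}$ and relabelling so that $c_1$ occupies the initial points gives a generator $x=[1,\ldots,a]\cup p$ with $a+b=n$, proving the first assertion; dropping the padding and taking a single cycle of length $|p|$ gives the generator $[1,\ldots,a]\cup(a+1,\ldots,a+|p|)$ of $I_{a+|p|}$ in the \emph{moreover} clause.

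The main obstacle is exactly this component-wise reduction: everything hinges on the isomorphism type of $\genset{f}$ being governed only by the length of the longest chain and the order of the permutation part, with shorter chains and the fine cycle structure invisible. The chain half of this is supplied by the surjections of \cref{lemma-chain-gen}, and the permutation half by the exponent-sum computation above; once these are in place the remainder is bookkeeping, together with the degenerate cases in which $f$ has no chains (so $M$ is a cyclic group and $a\in\{0,1\}$ under the one-point-chain convention) or no cycles (so $p$ is trivial and $|p|=1$).
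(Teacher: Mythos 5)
Your proof is correct and takes essentially the same route as the paper: decompose the generator into disjoint chains and cycles, regard $\genset{f}$ as the (monoid generated by the) diagonal element of the direct product $\genset{c_1}\times\cdots\times\genset{c_r}\times\genset{\pi}$, and reduce using \cref{lemma-chain-gen} for the chains and the fact that a cyclic group is determined by its order for the permutation part. The only difference is organisational: where the paper merges components two at a time by an explicit induction (two chains into one of maximal length, two cycles into one of length their $\lcm$), you compute the kernel congruence on $FIM(\{x\})$ in a single step as the intersection of the component kernels, which is a slightly cleaner way of packaging the same argument.
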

\begin{proof}
  If $M = \langle m \rangle$ is a monogenic inverse monoid, then $M$ is isomorphic to an inverse submonoid of $I_n$ for some $n$, and so we may suppose that $M$ is an inverse submonoid of $I_n$.  Thus $m$ is a union of disjoint cycles $y_1, \ldots, y_k$ and chains $h_1, \ldots, h_l$, and $M$ is an inverse submonoid of the direct product 
  \[
    M = \prod_{i=1}^k \langle y_i\rangle \times \prod_{j= 1}^l \langle h_j \rangle.
  \]
  If $k = 1$ and $l=1$, then there is nothing to prove, and so we suppose that $k \geq 2$ or $l \geq 2$.

  If $l \geq 2$, then we define $x$ to be a chain of length equal to the maximum of the lengths of $h_{m - 1}$ and $h_m$ 
  and we define 
  \[
    U = \prod_{i=1}^{k} \langle y_i\rangle \times \langle x\rangle \times \prod_{j = 1}^{l - 2} \langle h_j \rangle.
  \]
    We define $\phi\colon U \to M$ to be the homomorphism induced by the isomorphisms $y_i\mapsto y_i$ for all \(i\), $h_j\mapsto h_j$ for all \(j < l - 1\), and such that $x\mapsto (h_{l - 1}, h_l)$, which is also an isomorphism by~\cref{lemma-chain-gen}.
    Hence $\phi$ is an isomorphism when restricted to
    $\langle(y_1, \ldots, y_{k}, x, h_1, \ldots, h_{l - 2})\rangle$, and this monoid has a generator with one fewer chains than the generator of $M$.

     Suppose that $k\geq 2$. Then we define
$x$ to be any cycle of length $\lcm(|y_{k-1}|, |y_k|)$  and we define
  \[
    U = \prod_{i=1}^{k-2} \langle y_i\rangle \times \langle x\rangle \times \prod_{j= 1}^l \langle h_j \rangle.
  \]
  We define $\phi \colon U \to M$ to be the homomorphism induced by the isomorphisms $y_i\mapsto y_i$ for \(i < k - 1\), $h_j\mapsto h_j$ for all \(j\), and the isomorphism $\genset{x} \to \genset{(y_{k - 1}, y_k)}$ induced by $x\mapsto (y_{k - 1}, y_k)$. Then the restriction of $\phi$ to $\langle(y_1, \ldots, y_{k - 2}, x, h_1, \ldots, h_l)\rangle$ is an isomorphism of $\langle(y_1, \ldots, y_{k - 2}, x, h_1, \ldots, h_l)\rangle$ and $M$. In particular, these monoids are isomorphic, and the former has a generator with one fewer cycles than the generator of $M$.
\end{proof}

When one is not concerned with embedding an inverse monoid into a partial permutation monoid on a specific number of points, the following formulation of the above lemma is more natural.
\begin{lem}\label{lem-chain-cycle}
    If $M$ is a finite monogenic inverse monoid, then there exist $a, b, n\in \N$ such that 
    $M$ is isomorphic to the inverse submonoid of $I_n$ generated by 
    \[x=[1, \ldots, a]\cup (a+1, \ldots, a+b).\]
\end{lem}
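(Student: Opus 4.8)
The goal is to prove \cref{lem-chain-cycle}, which is a cleaner restatement of \cref{cor-chain-cycle}: any finite monogenic inverse monoid is isomorphic to one generated by a single chain-plus-cycle element $x = [1, \ldots, a] \cup (a+1, \ldots, a+b)$. The plan is to deduce this almost immediately from \cref{cor-chain-cycle}, which already does the heavy lifting of reducing an arbitrary generator (a disjoint union of several cycles and several chains) down to a single cycle and a single chain via the iterated collapsing argument using \cref{lemma-chain-gen}.

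First I would invoke \cref{cor-chain-cycle}: given a finite monogenic inverse monoid $M$, it embeds in some $I_N$, so by that lemma there exist $a', b' \in \N$ with $a' + b' = N$ such that $M$ is isomorphic to the inverse submonoid generated by $[1, \ldots, a'] \cup p$, where $p$ is a permutation on $\{a'+1, \ldots, b'+1\}$. The ``moreover'' clause of \cref{cor-chain-cycle} further identifies $M$ with the submonoid of $I_{a' + |p|}$ generated by $[1, \ldots, a'] \cup (a'+1, \ldots, a'+|p|)$, i.e. by a chain of length $a'$ disjoint from a single cycle of length $|p|$. Setting $a = a'$, $b = |p|$, and $n = a + b$, this is precisely a generator of the required form $[1, \ldots, a] \cup (a+1, \ldots, a+b)$.

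The only genuine content beyond citing \cref{cor-chain-cycle} is bookkeeping about the boundary and degenerate cases, and reconciling the notation. In particular, when $a = 0$ the chain is empty and $M$ is a cyclic group generated by a single cycle; when $b = 0$ (equivalently $p$ is trivial) the generator is a pure chain; and when $b = 1$ the ``cycle'' $(a+1)$ is a fixed point, so the generator is really a chain of length $a$ on $\{1, \ldots, a+1\}$ together with an isolated identity point, consistent with the parenthetical remark after the definition of a chain that a cycle on one point is the identity on that point. I would note these cases explicitly so that the single clean formula $x = [1, \ldots, a] \cup (a+1, \ldots, a+b)$ is understood to cover all of them.

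I do not expect any real obstacle here: the substantive work — using the faithful action on ordered pairs from \cref{lemma-chain-gen} to amalgamate two chains into one, and amalgamating two cycles into a single cycle of order equal to their lcm — has already been carried out in \cref{cor-chain-cycle}. The mild danger is a mismatch in how $n$ is quantified: \cref{cor-chain-cycle} fixes $n$ in advance (since $M \leq I_n$) and writes $a + b = n$ for the original embedding, whereas \cref{lem-chain-cycle} lets $n$ be chosen freely to make the statement as clean as possible. I would therefore emphasize that we are free to take $n = a + |p|$, using the ``moreover'' part rather than the first part of \cref{cor-chain-cycle}, so that the degree matches the size of the reduced generator exactly and no spurious fixed points are introduced.
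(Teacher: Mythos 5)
Your proposal is correct and matches the paper exactly: the paper's proof of \cref{lem-chain-cycle} is the one-line ``This is immediate from \cref{cor-chain-cycle}'', and your invocation of the ``moreover'' clause with $a = a'$, $b = |p|$, $n = a + b$ is precisely the intended deduction. Your extra remarks on the degenerate cases ($a = 0$, $b \leq 1$) and on letting $n$ be chosen freely are sensible bookkeeping but not needed beyond the citation.
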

\begin{proof}
    This is immediate from \cref{cor-chain-cycle}.
\end{proof}

We are now ready to give the theorem which allows us to classify the isomorphism types of finite monogenic inverse monoids. This theorem is essentially a reformulation of Theorem 7 from \cite{Preston1986aa}.
%Let [u] be a monogenic inverse semigroup, generated by u. Then [u]
%is of one of the types (r, s), (r, Fwd), FI. Moreover, these isomorphism types are
%distinct.
\begin{theorem}\label{thm-isomorphism}
  The inverse submonoid of \(I_{n+k}\) generated by a partial permutation
  \[[1, 2, \ldots , n]\cup (n+1, n+2, \ldots, n+k)\]
  is isomorphic to $S_{n, k}$ defined in \eqref{eq-presentation}
  for all \(n\geq 0\) and \(k\geq 1\).
  Moreover, if \(m\geq 0\) and \(l \geq 1\), then \(S_{n, k}\cong S_{m, l}\) if and only if \((n, k)=(m, l)\). 
  %In particular, 
  %every finite monogenic inverse monoid is defined by an inverse monoid presentation with at most two relations. 
\end{theorem}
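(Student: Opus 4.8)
The plan is to first build a surjective homomorphism \(\theta\colon S_{n,k}\to T\), where \(T\) denotes the inverse submonoid of \(I_{n+k}\) generated by \(x=[1,\ldots,n]\cup(n+1,\ldots,n+k)\), and then to force \(\theta\) to be injective by a counting argument resting on \cref{inv_sgp_pres_lem}. First I would check directly that the concrete partial permutation \(x\) satisfies both defining relations of \(\mathcal P_{n,k}\). Writing \(\sigma\) for the \(k\)-cycle on \(\{n+1,\ldots,n+k\}\), one computes \(x^jx^{-j}=\id_{\{1,\ldots,n-j\}\cup\{n+1,\ldots,n+k\}}\) for \(j\le n-1\) and \(x^jx^{-j}=\id_{\{n+1,\ldots,n+k\}}\) for \(j\ge n\), whence \(x^nx^{-n}=x^{n+1}x^{-(n+1)}\); and \(x^nx^{-n}x^k=x^nx^{-n}\) because \(x^k\) acts as the identity on the cycle. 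By the universal property of the presentation this yields a homomorphism \(\theta\) sending the generator to \(x\), and it is surjective since \(x\) generates \(T\).

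Next I would identify the \(\theta\)-images of the two families in the spanning set \(W\). For a first-family word \(x^{-a}x^bx^{-b}x^c\) with \(0\le a,c\le b<n\), a short calculation gives the partial permutation whose domain meets the chain in \(\{a+1,\ldots,a+n-b\}\), acting there by \(t\mapsto t+(c-a)\), and acting as \(\sigma^{c-a}\) on the cycle. From such an element one recovers \(a\) (one less than the least point of its chain domain), then \(b\) (from the domain size \(n-b\)), then \(c\) (from the shift); hence \(\theta\) is injective on the first family \(F\), giving \(|F|\) distinct images, each with nonempty chain part. For a second-family word \(x^{-a}x^nx^{-n}x^c\) with \(0\le a,c<k\) the chain domain is empty and \(\theta\) returns \(\sigma^{c-a}\); as \((a,c)\) vary these give exactly the \(k\) cycle-supported elements \(\id,\sigma,\ldots,\sigma^{k-1}\), disjoint from the first-family images. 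Thus \(|T|\ge |F|+k\).

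For the matching upper bound I would collapse the second family inside \(S_{n,k}\). Put \(e=x^nx^{-n}\). The relations give the decreasing chain \(e\le x^{n-1}x^{-(n-1)}\le\cdots\le xx^{-1}\le 1\) and the analogous \(e\le x^{-1}x\), together with \(ex^k=e\) and \(x^{-k}e=e\); the first relation gives \(xex^{-1}=x^{n+1}x^{-(n+1)}=e\), hence \(x^{-1}e=(x^{-1}x)ex^{-1}=ex^{-1}\), and taking inverses of this identity yields \(xe=ex\), so \(e\) is central. In the monoid \(eS_{n,k}\) (identity \(e\)) the element \(ex\) is a unit with inverse \(ex^{-1}\), since \(e\le xx^{-1}\) and \(e\le x^{-1}x\) give \((ex)(ex^{-1})=e=(ex^{-1})(ex)\), and \((ex)^k=ex^k=e\). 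Consequently each second-family element equals \(x^{-a}x^nx^{-n}x^c=ex^{-a}x^c=(ex)^{c-a}\), depending only on \((c-a)\bmod k\), so the second family contributes at most \(k\) elements. As \(W\) spans \(S_{n,k}\), this gives \(|S_{n,k}|\le |F|+k\). Combined with \(|T|\le |S_{n,k}|\) (surjectivity of \(\theta\)) and \(|T|\ge |F|+k\), all cardinalities coincide, so \(\theta\) is a bijection and hence an isomorphism.

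Finally, for the ``moreover'' clause I would read off two isomorphism invariants of \(T\cong S_{n,k}\). The elements \(\id,\sigma,\ldots,\sigma^{k-1}\) are exactly those of minimal rank and form the minimal ideal, a cyclic group of order \(k\); so an isomorphism forces \(k=l\). For \(n\), every element of \(T\) is a \(\theta\)-image of a word in \(W\), and such an image is idempotent only when its shift is \(0\), giving precisely the partial identities \(\id_{I\cup\{n+1,\ldots,n+k\}}\) for \(I\) a subinterval of \(\{1,\ldots,n\}\), together with \(\id_{\{n+1,\ldots,n+k\}}\); thus \(T\) has exactly \(\binom{n+1}{2}+1\) idempotents, strictly increasing in \(n\), so an isomorphism forces \(n=m\). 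The converse is trivial. I expect the second-family collapse to be the main obstacle, in particular the centrality of \(e\) obtained by inverting \(x^{-1}e=ex^{-1}\) and the fact that \(ex\) has order dividing \(k\) in \(eS_{n,k}\); the remaining steps are routine bookkeeping with partial permutations.
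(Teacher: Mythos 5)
Your proposal is correct, but it takes a genuinely different route from the paper's. The paper leans on its structural classification: since $S_{n,k}$ is finite by \cref{inv_sgp_pres_lem}, by \cref{lem-chain-cycle} it is isomorphic to $\genset{[1,\ldots,a]\cup(a+1,\ldots,a+b)}$ for some $a,b$; the relation $x^nx^{-n}=x^{n+1}x^{-(n+1)}$ forces $a\le n$, the relation $x^nx^{-n}=x^nx^{-n}x^k$ forces $b\mid k$, and the reverse inequalities come from mapping $S_{n,k}$ onto $\genset{[1,\ldots,n]}$ and onto $\genset{(1,\ldots,k)}$ (with a small $k=1$, $b=0$ caveat), so the parameters of any chain-plus-cycle generator are pinned down and the ``moreover'' clause falls out of the same argument. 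You instead work directly with the presentation: verify the relations for the concrete generator to obtain a surjection $\theta$, count $|F|+k$ distinct $\theta$-images of the words in $W$, and match this against the upper bound $|S_{n,k}|\le|F|+k$ by collapsing the second family, using the centrality of $e=x^nx^{-n}$ and the fact that $ex$ is a unit of order dividing $k$ in $eS_{n,k}$. I checked this and it is sound; the one gloss is that $e\le x^{-1}x$ is not merely ``analogous'' to $e\le xx^{-1}$ (which holds in any inverse monoid): it genuinely requires the relation $e=ex^k$, e.g.\ $ex^{-1}x=ex^kx^{-1}x=ex^{k-1}x=ex^k=e$, but since you list $ex^k=e$ and $x^{-k}e=e$ among your tools this is a one-line repair. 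Your route buys several things the paper's does not: it is independent of \cref{cor-chain-cycle}, it avoids the $k=1$ edge case entirely, it yields the exact order $|S_{n,k}|=\sum_{b=0}^{n-1}(b+1)^2+k$ together with a genuine normal-form set, and it proves the uniqueness clause via explicit isomorphism invariants (the kernel is a cyclic group of order $k$, and the idempotent count $\binom{n+1}{2}+1$ recovers $n$), which is more robust than the paper's implicit parameter-pinning. What the paper's approach buys is brevity and reuse: \cref{lem-chain-cycle} is needed anyway for the proof of the main theorem, and given it, the homomorphic-image argument settles both claims in a few lines, whereas your argument is longer and more computational.
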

\begin{proof}
  Since the set containing normal forms for $S_{n, k}$ from \cref{inv_sgp_pres_lem} is finite, the monoid $S_{n, k}$ is finite.
  It follows by \cref{lem-chain-cycle} that there are \(a, b\in \mathbb{N}\) such that $S_{n, k}$ is isomorphic to the inverse submonoid of \(I_{a+b}\) generated by the partial permutation
  \[x=[1, \ldots, a]\cup(a+1, \ldots, a+b).\]
  We will show that \(a=n\) and \(b=k\) (unless \(k=1\) in which case \(b\) may be \(0\)).
   The relation \( x^nx^{-n} =  x^{n+1}x^{-(n+1)}\) implies that
   % \[[1, \ldots, a]^n[1, \ldots, a]^{-n}(a+1) \ldots, (a+b)=x^nx^{-n} =  x^{n+1}x^{-(n+1)}=[1, \ldots, a]^{n+1}[1, \ldots, a]^{-(n+1)}(a+1) \ldots, (a+b).\]
   % So
   \[[1, \ldots, a]^n[1, \ldots, a]^{-n}=[1, \ldots, a]^{n+1}[1, \ldots, a]^{-(n+1)}.\]
   This can only hold if \(a\leq n\).
   Thus, the relation  \(x^nx^{-n}= x^nx^{-n}x^k \)
   implies that 
   \[\id_{\{a + 1, \ldots, a + b\}} =x^nx^{-n}= x^nx^{-n}x^k=(a+1, \ldots, a+b)^k \]
   where \(\id_{\{a + 1, \ldots, a + b\}}\) is the identity function on the set \(\{a + 1, \ldots, a + b\}\).
   Thus \(b|k\) and, in particular \(b\leq k\) since \(k\geq 1\).
   
   It suffices to show that \(a\geq n\) and \(b\geq k \) (unless \(k=1\), in which case \(b\) is also allowed to be \(0\)).
   Seeking a contradiction suppose that \(a<n\). Since the relations in \eqref{eq-presentation} hold for $y = [1, \ldots, n]$, the inverse monoid generated by  $y$ is a homomorphic image of $S_{n, k}$. In particular, 
   $x^{a}x^{-a}\neq x^{n}x^{-n}$ since $y^{a}y^{-a}\neq y^{n}y^{-n}$ (the latter is the empty function and the former is not). 
   However, we proved above that
    \(x^{a}x^{-a}=\id_{\{a + 1, \ldots, a + b\}}=x^{n}x^{-n}\), which is a contradiction.
   
   If $1 < k$ and $b < k$, then a similar argument implies (the contradiction) that the relation \(x^{n}x^{-n}= x^{n}x^{-n}x^b\) does not hold in $S_{n, k}$ since it does not hold in the image of the homomorphism extending 
   \(x\mapsto (1, \ldots, k)\).
\end{proof}

\section{Proof of the main theorem}

\begin{proof}[Proof of Theorem \ref{thm-main}.]
We start by proving the formula for $(n)\mathbf{t}$.
Let $A$ be the set of pairs $(t, p)$ such that $t$ and $p$ are the
threshold and period of some transformation of degree $n$ and let 
$$A_t = \set{p\in\N\setminus\{0\}}{(t,p)\in A}.$$
Then, by Lemma~\ref{lem-monogenic}, $|A| = (n)\t$ and $|A| = |A_0| +
\cdots + |A_{n - 1}| = (n)\t$.  
It therefore suffices to show that
$|A_t|=\s(n-t)$ for all $t \in \{0, \ldots, n - 1\}$.

If $t \in \{0, \ldots, n - 1\}$ and $B_t$ is the set of orders of elements in
$S_{n-t}$, then clearly $|B_t| = (n - t)\s$ and it suffices to show that $A_t = B_t$.
If $p \in B_t$, then we showed above that there exists a transformation $f\in
T_n$ with threshold $t$ and period $p$ and so $p \in A_t$. 
Conversely, if $p\in A_t$, then, by the definition of $A_t$, there exists $f\in
T_n$ with threshold $t$ and period $p$.  Since the threshold of $f$ is $t$, there are
at most $n - t$ points in any cycle of $f$, and so $p$ is the order of a
permutation in $S_{n -t}$. In other words, $p\in B_t$. 
Thus
\[(n)\t = \sum_{i=0}^{n-1} |A_i| =\sum_{i= 0}^{n-1}(n-i)\s=\sum_{i= 1}^{n}(i)\s,\]
as required.

Next, we prove that the formula for $(n)\mathbf{i}$ in the statement of the theorem is correct. 
By \cref{cor-chain-cycle}, every monogenic inverse submonoid of \(I_n\) is isomorphic to one generated by the disjoint union of a chain and permutation. Moreover, by \cref{thm-isomorphism}, two such elements generate isomorphic inverse submonoids of \(I_n\) if and only if the chains have the same length and the permutations have the same order. 
Hence for every length of chain $i$, the number of distinct monogenic inverse submonoids up to isomorphism is $(n - i)\mathbf{s}$.
It follows that
\[(n)\mathbf{i}=  \sum_{i = 0}^n (n - i)\mathbf{s}= \sum_{i= 0}^{n }  (i)\mathbf{s}.\qedhere\]
\end{proof}

\section*{Acknowledgements}
The second and third authors would like to thank the London Mathematical Society, the Heilbronn Institute for Mathematical Research, and the University of St Andrews, for their support of this work. The authors would also like to thank  A. Egri-Nagy for originally asking the question that gave rise to this paper, and J. East for discussing the case of the full transformation monoid with the third author.

\printbibliography

\end{document}